\newcommand{\ignore}[1]{}
\newtheorem{theorem}{Theorem}[section]
\newtheorem{lemma}[theorem]{Lemma}
\newtheorem{corollary}[theorem]{Corollary}
\newcommand{\Proof}[1]
        {
        \noindent
        \emph{Proof #1.}~
        }
\newsavebox{\smallProofsym}                     % smallproofsym.tex
\newcommand{\smalleop}[1]
        {
        \mbox{} \hfill #1~~\usebox{\smallProofsym}\!\!\!\!\!\!\
        }
\newcommand{\parag}[1]{\vspace{2mm}

\noindent{\bf #1} }
\newcommand{\placefig}[2]
        {\includegraphics[width=#2]{#1.eps}}
\newcommand{\ZZ}{\ensuremath{\mathbb Z}}
\newcommand{\RR}{\ensuremath{\mathbb R}}
\newcommand{\pts}{\mathcal P}
\DeclareMathOperator*{\EE}{\mathbb{E}}
\def\eps{{\varepsilon}}
\begin{document}
\pagenumbering{arabic}

\title{Local Properties in Colored Graphs, Distinct Distances, and Difference Sets}

\author{
Cosmin Pohoata\thanks{California Institute of Technology, Pasadena, CA, USA
{\sl apohoata@caltech.edu}.}
\and
Adam Sheffer\thanks{Department of Mathematics, Baruch College, City University of New York, NY, USA.
{\sl adamsh@gmail.com}. Supported by NSF grant DMS-1710305}}

\maketitle

\begin{abstract}
We study Extremal Combinatorics problems where local properties are used to derive global properties.
That is, we consider a given configuration where every small piece of the configuration satisfies some restriction, and use this local property to derive global properties of the entire configuration.
We study one such Ramsey problem of Erd\H os and Shelah, where the configurations are complete graphs with colored edges and every small induced subgraph contains many distinct colors.
Our bounds for this Ramsey problem show that the known probabilistic construction is tight in various cases.
We study one Discrete Geometry variant, also by Erd\H os, where we have a set of points in the plane such that every small subset spans many distinct distances.
Finally, we consider an Additive Combinatorics problem, where we are given sets of real numbers such that every small subset has a large difference set.

We derive new bounds for all of the above problems.
Our proof technique is based on introducing a variant of additive energy, which is based on edge colors in graphs.
\end{abstract}

\section{Introduction}

Erd\H os and Shelah \cite[Section V]{Erdos74} initiated the study of the following problem.
For positive integers $n,k,\ell$, we consider edge colorings of the complete graph $K_n$ such that every induced subgraph over $k$ vertices contains at least $\ell$ colors.
Let $f(n,k,\ell)$ be the minimum possible number of colors in a coloring of $K_n$ with this restriction.
As a first example, note that in $f(n,3,3)$ we consider edge colorings of $K_n$ where every triangle contains three distinct colors.
In this case no vertex can be adjacent to two edges with the same color, which immediately implies $f(n,3,3)\ge n-1$.

As another example, for any $k\ge 4$ we have
\begin{equation} \label{eq:TrivialColor}
f\left(n,k,\binom{k}{2}-\lfloor k/2 \rfloor +2 \right) = \Theta\left(n^2\right).
\end{equation}
Indeed, in this case every color can occur at most $\lfloor k/2 \rfloor -1$ times, since otherwise we will have an induced subgraph with $k$ vertices and at most $\binom{k}{2}-\lfloor k/2 \rfloor +1$ colors.
Since each color repeats a constant number of times, there are $\Theta\left(n^2\right)$ distinct colors.

One motivation for studying the above problem is that it can be seen as a variant of Ramsey's theorem (for example, see \cite[Chapter 27]{Jukna11}).
For fixed positive integers $c,k,$ and $\ell$, Ramsey's theorem studies the maximum $n$ satisfying that there exists an edge coloring of $K_n$ with $c$ colors where every $k$ of the vertices span at least $\ell=2$ distinct colors.
In the current problem, we allow $\ell$ to be larger than two, fix the value of $n$, and look for the minimum $c$ satisfying the above.

A few first results for the problem were obtained by Erd\H os, Elekes, and F\"uredi \cite[Section 9]{Erdos81}.
Erd\H os and Gy\'arf\'as \cite{EG97} started studying the problem more systematically.
Considering a restriction slightly weaker than in \eqref{eq:TrivialColor}, they derived the bound
\begin{equation}\label{eq:OneFromTriv}
 f\left(n,k,\binom{k}{2}-\lfloor k/2 \rfloor +1\right) = \Omega\left(n^{4/3}\right). 
\end{equation}
That is, the boundary between a trivial problem and a long-standing open problem passes between $\ell \ge \binom{k}{2}-\lfloor k/2 \rfloor +2$ and $\ell \le \binom{k}{2}-\lfloor k/2 \rfloor +1$.
Using a probabilistic construction, Erd\H os and Gy\'arf\'as \cite{EG97} derived the bound
\begin{equation} \label{eq:ProbLower}
f\left(n,k,\ell\right) = O\left(n^{\frac{k-2}{\binom{k}{2}-\ell+1}}\right).
\end{equation}

S\'ark\"ozy and Selkow \cite{SS01} proved that for every $k\ge 6$ there exists $\eps>0$ such that
\[ f\left(n,k,\binom{k}{2}-k +\lceil \log k\rceil +1\right) = \Omega\left(n^{1+\eps}\right). \]

Erd\H os and Gy\'arf\'as also proved that $f\left(n,k,k\right)$ is always polynomial in $n$.
Conlon, Fox, Lee, and Sudakov \cite{CFLS14} showed that this is tight by proving that $f\left(n,k,k-1\right)$ is subpolynomial in $n$ for every $k\ge 4$.
Axenovich, F\"uredi, and Mubayi \cite{AFM00} studied a bipartite variant of the problem.
These are just some of main results in the study of the problem, and far from being an exhaustive list.

The following theorem is the main result of the current work.
\begin{theorem} \label{th:LocalProp}
For any integers $k > m \ge 2$,
\[ f\left(n,k,\binom{k}{2} -  m\cdot \left\lfloor\frac{k}{m+1}\right\rfloor  + m+1 \right) = \Omega\left(n^{1+\frac{1}{m}}\right). \]
\end{theorem}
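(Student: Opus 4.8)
The plan is to recast the hypothesis as a bound on a colorful analogue of additive energy and then apply Cauchy--Schwarz. Write $\chi$ for the coloring and let $E_c$ be the set of edges of color $c$. For a vertex set $S$ define the \emph{excess} of $S$ to be $\binom{|S|}{2}$ minus the number of colors occurring inside $S$, equivalently $\sum_c\left(\bigl|E_c\cap\binom{S}{2}\bigr|-1\right)^{+}$. Adding a vertex to $S$ never decreases the excess, so the requirement that every $k$-set spans at least $\ell$ colors is equivalent to saying that every set of at most $k$ vertices has excess at most $\binom{k}{2}-\ell$. Setting $t=\lfloor k/(m+1)\rfloor$, one computes $\binom{k}{2}-\ell=m(t-1)-1$, so the hypothesis is exactly that \emph{no configuration on at most $k$ vertices has excess $\ge m(t-1)$}. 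Now define the energy $\mathcal{E}=\sum_c|E_c|^2$. Since $\sum_c|E_c|=\binom{n}{2}$, Cauchy--Schwarz gives $N\ge\binom{n}{2}^2/\mathcal{E}$ for the number of colors $N$, and because $\binom{n}{2}^2=\Theta(n^4)$ the whole theorem reduces to the single estimate $\mathcal{E}=O(n^{3-1/m})$.

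Expanding, $\mathcal{E}=\binom{n}{2}+2P+2D$, where $P$ counts unordered monochromatic \emph{cherries} (two edges of a common color sharing a vertex) and $D$ counts unordered monochromatic pairs of vertex-disjoint edges of a common color. First I would dispose of $P$ by a codegree argument. For a pair $\{u,w\}$ let $\kappa(u,w)$ be the number of vertices $v$ with $\chi(uv)=\chi(vw)$. If $\kappa(u,w)\ge m(t-1)$, then (using that $m(t-1)+2\le k$ in the relevant range, as $m(t-1)<mk/(m+1)<k$) the vertices $u,w$ together with $m(t-1)$ such centers $v$ already carry excess at least $m(t-1)$: each center forces one repeated color, and any coincidences among those colors only raise the excess. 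This is forbidden, so $\kappa(u,w)\le m(t-1)-1=O_{k,m}(1)$, and summing over pairs gives $P=\sum_{\{u,w\}}\kappa(u,w)=O(n^2)$, negligible against the target.

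The heart of the matter is $D=O(n^{3-1/m})$, and here lies the main obstacle: a single color class can be an enormous matching, contributing $\binom{|E_c|}{2}$ to $D$ while creating no local excess by itself, so no bound on individual color classes can work. The usable constraint couples \emph{pairs} of colors. If $c\ne c'$, then $E_c\cup E_{c'}$ cannot contain an alternating path or cycle with $m(t-1)+2$ edges, since any $k$ consecutive vertices of such a walk meet roughly half their edges in each color and hence have excess about $L-2\ge m(t-1)$. Thus any two color classes interact only in components of bounded size $B=O_{k,m}(1)$.

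The plan for the final step is to convert this pairwise sparseness of interactions into the energy bound through an even-cycle (Bondy--Simonovits) estimate: a monochromatic disjoint pair is an additive-quadruple-type configuration, and I expect an appropriate auxiliary graph recording these configurations to be forced to be $C_{2m}$-free, on pain of producing a forbidden $k$-set, whence it has at most $\mathrm{ex}(n,C_{2m})=O(n^{1+1/m})$ edges. It is precisely this even-cycle threshold that should supply the exponent $1/m$, matching the blocks of size $m+1$ in the statement, and tracking it through the counting should yield $D=O(n^{3-1/m})$ and hence $\mathcal{E}=O(n^{3-1/m})$. Making this last conversion rigorous---in particular controlling the spread-out, matching-like coincidences that are invisible to the local excess yet dominate $D$---is the step I expect to be the genuine difficulty.
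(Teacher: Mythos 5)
Your framework matches the paper's up to the point where the real work begins: the color energy $\sum_c m_c^2$, the Cauchy--Schwarz step, and the reduction of the whole theorem to the single estimate $\sum_c m_c^2=O(n^{3-1/m})$ are exactly the paper's argument, and your codegree bound on monochromatic cherries is a correct (and slightly different) form of the paper's first forbidden configuration, which suffices to dispose of the term $P$. The genuine gap is the one you flag yourself: the bound $D=O(n^{3-1/m})$ is never proved. The final step is a hope that some unspecified auxiliary graph recording monochromatic disjoint pairs is $C_{2m}$-free, so that Bondy--Simonovits applies; no such graph is exhibited, and it is not clear what its vertices and edges would be, nor why a $2m$-cycle in it would force a $k$-set of excess $\ge m(t-1)$. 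Moreover, the only structural constraint you extract for $D$ is \emph{pairwise} (no long alternating path in $E_c\cup E_{c'}$), whereas the paper's exponent $1/m$ is generated by a genuinely \emph{$m$-wise} forbidden configuration: no $m$ colors $c_1,\ldots,c_m$, each appearing at least $\lfloor k/(m+1)\rfloor$ times, can have $|V_{c_1}\cap\cdots\cap V_{c_m}|\ge\lfloor k/(m+1)\rfloor$, where $V_c$ denotes the set of endpoints of color-$c$ edges. For $m\ge 3$ you never formulate any constraint coupling more than two colors, so the information you have assembled is strictly weaker than what the paper uses to obtain the exponent.

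For comparison, here is how the paper closes the argument. It stratifies the colors by multiplicity into dyadic classes $C_j=\{c: m_c\ge 2^j\}$ and bounds $k_j=|C_j|$ two ways: for $2^j$ above roughly $n^{(m-1)/m}$, the $m$-wise non-intersection property above, fed into the elementary set-intersection counting lemma of Erd\H os (Lemma \ref{le:GeneralSetIntersection}) applied to the sets $V_c$, gives $k_j=O(n^m/2^{jm})$; below that threshold one uses the trivial $k_j\le n^2/2^j$. Summing $\sum_j k_j 2^{2j}$ over the two ranges yields $O(n^{3-1/m})$ for the entire energy, handling your $P$ and $D$ in one stroke. This dyadic decomposition by color multiplicity, together with the $m$-wise intersection lemma, is precisely the mechanism your proposal is missing, and it --- rather than an even-cycle extremal number --- is where the exponent $1/m$ comes from.
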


For example, by applying Theorem \ref{th:LocalProp} with $m=2$ we get
\[ f\left(n,k,\binom{k}{2} - 2\cdot \left\lfloor\frac{k}{3}\right\rfloor + 3 \right) = \Omega\left(n^{3/2}\right). \]
In \eqref{eq:OneFromTriv}, Erd\H os and Gy\'arf\'as derived a bound of $\Omega(n^{4/3})$ colors when $\ell$ is one unit away from the trivial case.
Theorem \ref{th:LocalProp} implies this bound already when $\ell \ge \binom{k}{2} - 3\cdot \left\lfloor k/4\right\rfloor + 4$.

The bound of Theorem \ref{th:LocalProp} is asymptotically tight up to sub-polynomial factors for every $m\ge 2$, except possibly for small values of $k$.
Indeed, for every $\eps>0$ the bound of \eqref{eq:ProbLower} implies
\[ f\left(n,k,\binom{k}{2} -  m\cdot \left\lfloor\frac{k}{m+1}\right\rfloor  + m+1 \right) = O\left(n^{1+\frac{1}{m}+\eps}\right), \]
for every sufficiently large $k$.

\parag{Distinct distances with local properties. }
The \emph{Erd\H os distinct distances problem} is a main problem in Discrete Geometry.
This problem asks for the minimum number of distinct distances spanned by a set of $n$ points in $\RR^2$.
That is, denoting the distance between two points $p, q \in \RR^2$ as $|pq|$, the problem asks for $\min_{|\pts|=n} |\{|pq| :\  p, q \in \pts \}|$.
Note that $n$ equally spaced points on a line span $n-1$ distinct distances.
Erd\H os \cite{erd46} observed that a $\sqrt{n}\times \sqrt{n}$ section of the integer lattice $\ZZ^2$ spans $\Theta(n/\sqrt{\log n})$ distinct distances.
Proving that every point set determines at least some number of distinct distances turned out to be a deep and challenging problem.

The above problem is just one out of a large family of distinct distances problems, including higher-dimensional variants, structural problems, and many other types of problems (for example, see \cite{Sheffer14}).
The main problems in this family were proposed by Erd\H os and have been studied for decades.
After over 60 years and many works on distinct distances problems, Guth and Katz \cite{GK15} almost settled the original question by proving that every set of $n$ points in $\RR^2$ spans $\Omega(n/ \log n)$ distinct distances.
Surprisingly, so far this major discovery was not followed by significant progress in the other main distinct distances problems.

Let $\phi(n,k,l)$ denote the minimum number of distinct distances that are determined by a planar $n$ point set $\pts$ with the property that any $k$ points of $\pts$ determine at least $l$ distinct distances. That is, by having a local property of every small subset of points, we wish to obtain a global property of the entire point set.

For example, the value of $\phi(n,3,3)$ is the minimum number of distinct distances that are determined by a set of $n$ points that do not span any isosceles triangles (including degenerate triangles with three collinear vertices).
Since no isosceles triangles are allowed, every point determines $n-1$ distinct distances with the other points of the set, and we thus have $\phi(n,3,3) = \Omega(n)$.
Erd\H os \cite{Erdos86} observed the following upper bound for $\phi(n,3,3)$.
Behrend \cite{Behrend46} proved that there exists a set $A$ of positive integers $a_1< a_2 < \cdots < a_n$ such that no three elements of $A$ determine an arithmetic progression and $a_n < n2^{O(\sqrt{\log n})}$.
Therefore, the point set $\pts_1 = \{(a_1,0), (a_2,0),\ldots, (a_n,0)\}$ does not span any isosceles triangles.
Since $\pts_1 \subset \pts_2 = \{(1,0),(2,0),\ldots,(a_n,0) \}$ and $D(\pts_2)< n2^{O(\sqrt{\log n})}$, we have $\phi(n,3,3) < n2^{O(\sqrt{\log n})}$.

Recently, Fox, Pach, and Suk \cite{FPS17} showed that for any $\eps>0$
\[ \phi\left(n,k,\binom{k}{2}-k+6\right) = \Omega\left(n^{8/7-\eps}\right).\]
The proof of this result is based on the geometry of the problem, and thus does not extend to the more general variant of $f(n,k,\ell)$.

Upper bounds for $f(n,k,\ell)$ do not immediately apply to $\phi(n,k,\ell)$.
For example, the bounds of Conlon, Fox, Lee, and Sudakov \cite{CFLS14} are false for the distances case (these bounds are subpolynomial in $n$ while every $n$ points in $\RR^2$ span $\Omega(n/\log n)$ distinct distances).
On the other hand, any lower bound for $f(n,k,\ell)$ remains valid for $\phi(n,k,\ell)$.
In particular, the following is an immediate corollary of Theorem \ref{th:LocalProp}.
\begin{corollary} \label{co:LocalDist}
For any integers $k > m \ge 2$,
\[ \phi\left(n,k,\binom{k}{2} -  m\cdot \left\lfloor\frac{k}{m+1}\right\rfloor  + m+1 \right) = \Omega\left(n^{1+\frac{1}{m}}\right). \]
\end{corollary}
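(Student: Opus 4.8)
The plan is to reduce the distinct-distances problem to the edge-coloring problem, establishing the inequality $\phi(n,k,\ell) \ge f(n,k,\ell)$ for all admissible parameters, and then to invoke Theorem~\ref{th:LocalProp} directly. This is exactly the one-directional transfer of lower bounds alluded to just before the statement.

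First I would fix $\ell = \binom{k}{2} - m\lfloor k/(m+1)\rfloor + m + 1$ and take a planar point set $\pts$ with $|\pts| = n$ realizing $\phi(n,k,\ell)$, so that every $k$ points of $\pts$ span at least $\ell$ distinct distances. I would build an edge coloring of $K_n$ by identifying the vertices with the points of $\pts$ and assigning to each edge $\{p,q\}$ the color $|pq|$; two edges then share a color precisely when their endpoint pairs are equidistant. The number of colors used equals the number of distinct distances determined by $\pts$, which is $\phi(n,k,\ell)$.

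Next I would check that this coloring satisfies the local hypothesis of the $f$-problem. Any $k$ vertices correspond to $k$ points of $\pts$, which by the local property span at least $\ell$ distinct distances; hence the induced subgraph on those $k$ vertices uses at least $\ell$ distinct colors. Thus the coloring is admissible for $f(n,k,\ell)$, so its color count is at least $f(n,k,\ell)$, giving $\phi(n,k,\ell) \ge f(n,k,\ell)$. Applying Theorem~\ref{th:LocalProp} with this $\ell$ yields $\phi(n,k,\ell) = \Omega(n^{1+1/m})$.

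There is essentially no obstacle here, which is why the statement is phrased as a corollary: the reduction is one-directional precisely because only a lower bound is sought, and every geometric configuration produces a legitimate color-counting instance regardless of which real numbers arise as distances. The only subtlety worth flagging is that the reverse inequality fails in general, since an arbitrary coloring need not be realizable by an actual point set — this is exactly why upper bounds for $f$ (such as the subpolynomial constructions of Conlon, Fox, Lee, and Sudakov) do not descend to $\phi$ — but this asymmetry does not affect the lower bound we are establishing.
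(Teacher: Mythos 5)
Your reduction is exactly the paper's argument: the paper proves the corollary in one sentence by building the complete graph on the point set with each distance as a distinct color, which is precisely your transfer $\phi(n,k,\ell)\ge f(n,k,\ell)$ followed by Theorem~\ref{th:LocalProp}. The proposal is correct and simply spells out the details the paper leaves implicit.
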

To prove this corollary one can build a complete graph where every point is a vertex, and every distance corresponds to a distinct edge color.

Corollary \ref{co:LocalDist} leads to a better bound than the one of Fox, Pach, and Suk \cite{FPS17} when $\ell \ge \binom{k}{2} - 7\cdot \lfloor k/8\rfloor + 8$.

While there are many problems in which the conjectured number of distinct distances is $\Omega(n^{2-\eps})$, we are very far from deriving this bound for any of those.
For example, see \cite{Sheffer14}.
As far as we know, the above is the first case where a bound stronger than $\Omega(n^{4/3})$ is obtained for a non-trivial distinct distances problem.

\parag{Difference sets with local properties.}
Zeev Dvir recently suggested studying the following Additive Combinatorics variant of the local properties problem.
Given a finite $A\subset \RR$, the \emph{difference set} of $A$ is
\[ A-A = \{a-a'\ :\ a,a'\in A \}. \]

For positive integers $n,k,\ell$, we consider sets $A\subset \RR$ of size $n$ such that every subset $A'\subset A$ of size $k$ satisfies $|A'-A'|\ge \ell$.
Let $g(n,k,\ell)$ be the minimum size of $A-A$ among all sets $A$ that satisfy the above restriction.
For simplicity we will ignore non-positive differences.
For example, when considering sets with no 3-term arithmetic progression we will write $g(n,3,3)$ instead of $g(n,3,7)$ (we ignore 0 and three negative differences).
This notation does not change the problem, and is somewhat more intuitive.

While this seems like an interesting natural Additive Combinatorics problem, we only managed to find one minor and brief mention of it.
It is stated in \cite[Section 9]{EG97} that Erd\H os and S\'os proved $g(n,4,5) \ge \binom{n}{2}-n+2$, although it seems that this was never published.
The following is a corollary of Theorem \ref{th:LocalProp}.
\begin{corollary}
For any integers $k > m \ge 2$,
\[ g\left(n,k,\binom{k}{2} - m\cdot \left\lfloor\frac{k}{m+1}\right\rfloor  + m+1 \right) = \Omega\left(n^{1+\frac{1}{m}}\right). \]
\end{corollary}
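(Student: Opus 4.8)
The plan is to mimic the reduction used for Corollary \ref{co:LocalDist}, passing from the additive setting to the colored-graph setting so that Theorem \ref{th:LocalProp} applies verbatim. We would start from an arbitrary set $A\subset\RR$ of size $n$ that satisfies the local property---that is, every $A'\subset A$ with $|A'|=k$ has at least $\ell=\binom{k}{2}-m\left\lfloor\frac{k}{m+1}\right\rfloor+m+1$ positive differences---and for which $|A-A|$ attains the minimum $g(n,k,\ell)$. We then build the complete graph $K_n$ whose vertex set is $A$, and color each edge $\{a,a'\}$ by the value $|a-a'|$; that is, two edges receive the same color precisely when their endpoints have the same absolute difference.

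The central observation is that this color assignment translates the local property of $A$ into exactly the local property counted by $f$. For any $k$ vertices $A'\subseteq A$, the colors appearing on the $\binom{k}{2}$ edges of the induced copy of $K_k$ are precisely the distinct positive differences determined by $A'$, so their number equals $|A'-A'|$ in the positive-difference convention, which by hypothesis is at least $\ell$. Hence our coloring is a valid coloring of the kind $f(n,k,\ell)$ ranges over: every induced $K_k$ spans at least $\ell$ colors. Since $f(n,k,\ell)$ is by definition the minimum number of colors over all such colorings, the coloring we constructed uses at least $f(n,k,\ell)$ colors.

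It then remains only to identify the number of colors with the size of the difference set. By construction the set of colors used is exactly the set of distinct positive differences of $A$, whose cardinality is $|A-A|$ (under the fixed sign convention introduced above). Combining the two facts gives
\[ g(n,k,\ell) = |A-A| \ge f\left(n,k,\ell\right) = \Omega\left(n^{1+\frac{1}{m}}\right), \]
where the final equality is Theorem \ref{th:LocalProp} applied with the same value of $\ell$. This yields the claimed bound. Phrased abstractly, the reduction shows that any lower bound for $f(n,k,\ell)$ transfers directly to $g(n,k,\ell)$.

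We do not expect a genuine obstacle here, since the argument is a direct reduction rather than a new combinatorial estimate; the only point requiring care is the bookkeeping around the positive-versus-signed difference convention, so that both the count of edge colors is matched to $|A-A|$ and the local threshold $\ell$ is interpreted consistently on the two sides. We emphasize that, unlike upper-bound constructions such as those of \cite{CFLS14}, which need not survive the passage to differences, lower bounds for $f$ do transfer through this reduction, and that is precisely what we invoke.
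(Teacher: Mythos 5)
Your proposal is correct and matches the paper's argument exactly: the paper proves this corollary by the same one-line reduction, building $K_n$ on the elements of $A$ with each difference serving as a distinct edge color, so that the local hypothesis on difference sets becomes the local hypothesis on colors and Theorem \ref{th:LocalProp} applies. Your additional care with the positive-difference convention is consistent with the paper's stated convention and introduces no discrepancy.
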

To prove this corollary one can build a complete graph where every element of $A$ is a vertex, and every difference corresponds to a distinct edge color.

We now present an example illustrating that $g(n,k,\ell)$ and $\phi(n,k,\ell)$ may be very different problems.
Currently, nothing is known about $\phi(n,4,5)$ beyond the trivial bounds $\phi(n,4,5)=\Omega(n)$ and $\phi(n,4,5)=O(n^2)$ --- this is considered to be a difficult open problem.
On the other hand, it can be easily shown that $g(n,4,5) = \Theta(n^2)$.
Indeed, consider a set $A$ of $n$ real numbers and with every $A' \subset A$ of size four satisfying $|A'-A'|\ge 5$.
Assume that there exist four distinct reals $a_1,a_2,a_3,a_4\in A$ such that $a_1-a_2=a_3-a_4$.
This implies that $a_1-a_3 = a_2-a_4$, and thus these four points span at most four differences.
The above contradiction implies that no difference repeats more than twice (it is still possible that $a_1-a_2 = a_3-a_1$).
We conclude that $|A-A|=\Theta(n^2)$.

It would be interesting to further study this problem of difference sets with local properties.
For example, what are non-trivial upper bounds for $g(n,k,\ell)$? What happens when $\ell$ is much smaller than $\binom{k}{2}$?

\parag{Our proof technique.}
To prove Theorem \ref{th:LocalProp}, we define a new abstract variant of the concept of additive energy, which is a main tool in Additive Combinatorics.
Given a finite $A\subset \RR$, the \emph{sum set} of $A$ is
\[ A+A = \{a+a'\ :\ a,a'\in A \}. \]

A uniformly chosen set $A$ of a fixed finite size is expected to satisfy $|A+A|=\Theta(|A|^2)$.
On the other hand, there are sets that satisfy $|A+A|=\Theta(|A|)$, such as arithmetic progressions.
We say that such sets have an ``additive structure'' that leads to a small sum set.
The \emph{polynomial Freiman--Ruzsa }conjecture is a main open problem in Additive Combinatorics, asking to characterize the sets that have a small sum set.

One main tool for studying the additive structure of a finite $A\subset \RR$ is the \emph{additive energy} of $A$:
\[ E(A) = \left|\left\{(a,b,c,d)\in A^4 :\ a+b=c+d\right\}\right|. \]
While the size of the sum set of $A$ is at least linear in $|A|$ and at most quadratic in $|A|$, the additive energy of $A$ is at least quadratic in $|A|$ and at most cubic in $|A|$.

A small sum set implies a large additive energy.
In particular, a simple use of the Cauchy-Schwarz inequality implies $E(A)\ge \frac{|A|^4}{|A+A|}$.
In the other direction, the \emph{Balog--Szemer\'edi--Gowers theorem} implies that if $E(A)$ is large then there exists a large subset $A'\subset A$ such that $A'+A'$ is small.
For more information, see for example the book ``Additive Combinatorics'' by Tao and Vu \cite{TV06}.

To prove Theorem \ref{th:LocalProp}, we define the following abstract graph variant of additive energy.
Given a graph $G=(V,E)$ with colored edges, we denote by $c(u,v)$ the color of the edge $(u,v)\in E$.
We define the \emph{color energy} of $G$ as\footnote{We use $\EE(G)$ rather than $E(G)$ since the latter is a standard notation for the set of edges of $G$.}
\begin{equation*}
\EE(G) = \left|\left\{(v_1,u_1,v_2,u_2)\in V^{4} :\ c(v_1,u_1) = c(v_2,u_2)\right\}\right|.
\end{equation*}
That is, instead of the number of quadruples that satisfy an additive relation, we ask for the number of quadruples that satisfy a color relation.

There exist energy variants for Cayley graphs that are based on the corresponding group action (for example, see Gowers \cite{Gowers08}).
However, as far as we know this is the first use of such a non-algebraic energy variant.

\parag{Acknowledgements.}
We are indebted to the anonymous referees, who made many helpful suggestions for improving a previous draft of this work.
We would like to thank Zeev Dvir for suggesting the Additive Combinatorics variant of the problem, and to Robert Krueger for some helpful discussions.

\section{Proof of Theorem \ref{th:LocalProp}}\label{sec:DDlocal}

In this section we prove Theorem \ref{th:LocalProp}.
We begin by recalling the statement of this theorem.
\vspace{1mm}

\noindent {\bf Theorem \ref{th:LocalProp}.}
\emph{For any integers $k > m \ge 2$,}
\[ f\left(n,k,\binom{k}{2} -  m\cdot \left\lfloor\frac{k}{m+1}\right\rfloor  + m+1 \right) = \Omega\left(n^{1+\frac{1}{m}}\right). \]

To prove the theorem, we will rely on the following simple counting lemma (see \cite{Erdos64} and \cite[Lemma 2.3]{Jukna11}).

\begin{lemma} \label{le:GeneralSetIntersection}
Let $A$ be a set of $n$ elements and let $d\ge 2$ be an integer.
Let $A_1,\ldots,A_k$ be subsets of $A$, each of size at least $m$.
If $k \ge 2d n^d/m^d$ then there exist $1\le j_1 < \ldots < j_d \le k$ such that $|A_{j_1}\cap \ldots \cap A_{j_d}| \ge \frac{m^d}{2n^{d-1}}$.
\end{lemma}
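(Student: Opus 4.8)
The plan is to prove the lemma by a double-counting argument combined with a convexity (Jensen) estimate, which is the standard route for intersection statements of this type. Throughout, write $\deg(a)$ for the number of indices $i\in\{1,\dots,k\}$ with $a\in A_i$. First I would set up the count of incidences between elements of $A$ and $d$-subsets of the index set. Since each $A_i$ has at least $m$ elements, $\sum_{a\in A}\deg(a)=\sum_{i=1}^{k}|A_i|\ge km$. The number of pairs $\bigl(a,\{j_1<\dots<j_d\}\bigr)$ with $a\in A_{j_1}\cap\dots\cap A_{j_d}$ equals, on the one hand, $\sum_{a\in A}\binom{\deg(a)}{d}$ (summing over $a$), and on the other hand $\sum_{S}|A_{j_1}\cap\dots\cap A_{j_d}|$ (summing over the $\binom{k}{d}$ index $d$-subsets $S=\{j_1,\dots,j_d\}$). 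Hence the average of $|A_{j_1}\cap\dots\cap A_{j_d}|$ over all $d$-subsets is $\binom{k}{d}^{-1}\sum_{a}\binom{\deg(a)}{d}$, and it suffices to show this average is at least $m^d/(2n^{d-1})$, since then some $d$-subset attains the claimed bound.

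Next I would lower-bound the incidence count by convexity. Since $x\mapsto\binom{x}{d}$ is convex, Jensen's inequality gives $\sum_{a}\binom{\deg(a)}{d}\ge n\binom{\bar d}{d}$, where $\bar d=\tfrac1n\sum_{a}\deg(a)\ge km/n$ and, trivially, $\bar d\le k$. Consequently the average intersection size is at least $n\binom{\bar d}{d}\big/\binom{k}{d}=n\prod_{i=0}^{d-1}\frac{\bar d-i}{k-i}$. It then remains to verify $\prod_{i=0}^{d-1}\frac{\bar d-i}{k-i}\ge m^d/(2n^d)$. Factoring out the leading behaviour, $\prod_{i=0}^{d-1}\frac{\bar d-i}{k-i}\ge(\bar d/k)^d\prod_{i=0}^{d-1}(1-i/\bar d)\ge(\bar d/k)^d\bigl(1-\binom{d}{2}/\bar d\bigr)$, and since $\bar d\ge km/n$ the leading factor is at least $(m/n)^d$. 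The hypothesis $k\ge 2dn^d/m^d$ forces $\bar d\ge km/n\ge 2d(n/m)^{d-1}$ to be large, which is precisely what is needed to guarantee $1-\binom{d}{2}/\bar d\ge \tfrac12$ and thereby close the estimate.

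I expect the main obstacle to lie entirely in this last step: the careful accounting of the lower-order correction terms $\prod_{i}(1-i/\bar d)$, equivalently the comparison of the falling factorial $\bar d(\bar d-1)\cdots(\bar d-d+1)$ with $\bar d^{\,d}$, and checking that the stated constant in $k\ge 2dn^d/m^d$ really does absorb them against the factor $\tfrac12$ in the conclusion. An equivalent and perhaps cleaner bookkeeping is to count ordered $d$-tuples of indices: by the power-mean inequality, $\sum_{(i_1,\dots,i_d)\in\{1,\dots,k\}^d}|A_{i_1}\cap\dots\cap A_{i_d}|=\sum_{a}\deg(a)^d\ge n\bar d^{\,d}\ge k^dm^d/n^{d-1}$. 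Separating the tuples with all distinct indices (which contribute $d!\sum_{S}|A_{j_1}\cap\dots\cap A_{j_d}|$) from the at most $\binom{d}{2}k^{d-1}$ tuples with a repeated index (each an intersection of size at most $n$) isolates the error term explicitly as $\binom{d}{2}k^{d-1}n$, and dividing by $\binom{k}{d}$ makes the role of the hypothesis on $k$ immediate. Either way, the structural part of the argument is routine; the only delicate point is the constant-chasing that ties the hypothesis to the threshold $m^d/(2n^{d-1})$.
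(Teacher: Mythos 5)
First, a point of reference: the paper does not actually prove this lemma --- it is quoted from Erd\H os \cite{Erdos64} and \cite[Lemma 2.3]{Jukna11} --- so there is no in-paper proof to compare against, and your proposal must stand on its own. Your architecture (double-count incidences between elements and $d$-subsets of indices, then Jensen) is the standard proof of this statement, and that part is fine. But the constant-chasing that you yourself flag as the delicate point does not close in either of the two bookkeepings you propose, and this is a genuine gap rather than routine verification. The step $\prod_{i=0}^{d-1}\frac{\bar d-i}{k-i}\ge(\bar d/k)^d\prod_{i=0}^{d-1}(1-i/\bar d)$ throws away the $-i$ in the denominators, and after that no rescue is possible: to get $1-\binom{d}{2}/\bar d\ge\tfrac12$ you need $\bar d\ge d(d-1)$, while the hypothesis only guarantees $\bar d\ge 2d(n/m)^{d-1}$, which is smaller than $d(d-1)$ whenever $d\ge4$ and $m$ is close to $n$ (e.g.\ for $d=10$ and $m=0.9n$ you are guaranteed $\bar d\ge 51.6$ but need $\bar d\ge 90$; for larger $d$ the factor $1-\binom{d}{2}/\bar d$ is negative and the argument proves nothing). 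The ordered-tuple variant has the same defect: the error term $\binom{d}{2}k^{d-1}n$ is at most half of $k^dm^d/n^{d-1}$ only when $k\ge d(d-1)n^d/m^d$, which is stronger than the hypothesis $k\ge 2dn^d/m^d$ once $d\ge 4$.

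The missing idea is to keep each numerator paired with its denominator and exploit the cancellation between them: with $\bar d\ge km/n$ one has
\[
\frac{km/n-i}{k-i}=\frac{m}{n}\left(1-\frac{i(n-m)}{m(k-i)}\right),
\]
so the correction terms carry a factor $n-m$ that vanishes precisely in the regime where $(n/m)^{d-1}$ stops being large. Since $k\ge 2d(n/m)^d\ge 2d$ gives $k-i>k/2$ for $i\le d-1$, one gets
\[
\sum_{i=0}^{d-1}\frac{i(n-m)}{m(k-i)}\le\frac{\binom{d}{2}(n-m)}{m}\cdot\frac{2}{k}\le\frac{(d-1)(n-m)m^{d-1}}{2n^{d}}\le\frac{(d-1)^{d}}{2d^{d}}<\frac{1}{2e}<\frac12,
\]
using $\max_{0\le m\le n}(n-m)m^{d-1}=\tfrac{(d-1)^{d-1}}{d^{d}}n^{d}$, and then $\prod_i(1-x_i)\ge1-\sum_i x_i\ge\tfrac12$ delivers $\binom{\bar d}{d}/\binom{k}{d}\ge\tfrac12(m/n)^d$ and hence the stated threshold. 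So your plan is repairable, but as written the final estimate genuinely fails for $d\ge4$ with $m$ near $n$; it is not merely a matter of checking constants.
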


\begin{proof}[Proof of Theorem \ref{th:LocalProp}.]
To prove the theorem we will prove that for any integers $a,b \ge 2$
\begin{equation} \label{eq:abFormulation}
f\left(n,a(b+1),\binom{a(b+1)}{2} - ba + b+1 \right) = \Omega\left(n^{1+\frac{1}{b}}\right).
\end{equation}
When $k$ is divisible by $m+1$, we obtain the statement of the theorem by setting $b=m$ and $a= k/(m+1)$.
When $k$ is not divisible by $m+1$, we write $a= \lfloor k/(m+1) \rfloor$, and rewrite \eqref{eq:abFormulation} as $f\left(n,k,\binom{k}{2} - ba + b+1 \right)= \Omega\left(n^{1+\frac{1}{b}}\right)$.
The rest of the proof is the same in both cases.
However, when reading the proof for the first time we recommend assuming that $k$ is divisible by $m+1$.

Let $G=(V,E)$ be a copy of $K_n$ with colored edges, such that every induced $K_{a(b+1)}$ contains at least  $\binom{a(b+1)}{2} - ba + b+1$ distinct colors.
We denote the set of colors as $C = \{c_1,c_2,\ldots\}$, and the color of an edge $(v,u)\in E$ as $c(v,u)$.
Our goal is to prove that $|C|=\Omega\left(n^{1+1/b}\right)$, and we begin by studying some configurations that cannot occur in $G$.

\parag{Forbidden configurations.}
We first show that no vertex can be adjacent to many edges of the same color.
Assume for contradiction that there exists a vertex $v \in V$ and color $c\in C$, such that at least $ba-b+1$ vertices $u\in V$ satisfy $c(v,u)=c$.
Let $V' \subset V$ consist of $v$, of $ba-b+1$ vertices satisfying $c(v,u)=c$, and of $b+a-2$ additional vertices.
Then $V'$ is a set of $a(b+1)$ vertices, and the induced subgraph on $V'$  contains at most $\binom{a(b+1)}{2} - ba + b$ distinct colors, contradicting the assumption on $G$.
This contradiction implies that for every $v\in V$ and $c\in C$, at most $ba-b$ of the edges incident to $v$ have color $c$.
This in turn implies that every color appears at most $b(a-1)n/2$ times.

We next show that there cannot be $a$ vertices that are adjacent to the same $b$ ``popular'' colors.
Let $V_c \subset V$ be the set of endpoints of edges of color $c$.
For an integer $j$, let $C_j$ be the set of colors that appear at least $2^j$ times.
For $j$ with  $2^j\ge a$, assume for contradiction that there exist $c_1, \ldots, c_b \in C_j$ that satisfy $|V_{c_1} \cap \cdots \cap V_{c_b}| \ge a$.
Let $V' = \{v_1,v_2,\ldots,v_a\}$ be a set of $a$ vertices from $V_{c_1} \cap \cdots \cap V_{c_b}$.
That is, for every vertex $v\in V'$ and every color $c \in \{c,\ldots,c_b\}$ there exists $u\in V$ satisfying $c(v,u)=c$.
An example is depicted in Figure \ref{fi:ForbiddenConf2}.

\begin{figure}[h]
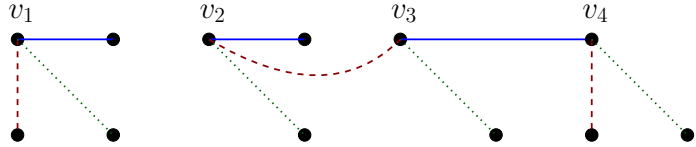

\centerline{\placefig{ForbiddenConf2}{0.6\textwidth}}
\vspace{-1mm}

\caption{\small \sf The three colors are represented as solid, dashed, and dotted edges. Every vertex of $V' = \{v_1,v_2,v_3,v_4\}$ is incident to an edge of every color. }
\label{fi:ForbiddenConf2}
\vspace{-2mm}
\end{figure}

We construct a subset $V'' \subset V$ as follows: For every $v\in V'$ and color $c \in C_j$, if there exist vertices $u \in V\setminus V'$ such that $c(v,u)=c$ then we add one such vertex to $V''$.
If a vertex $u \in V\setminus V'$ was added more than once to $V''$, we consider $V''$ as containing a single copy of $u$ (that is, $V''$ is not a multiset).
Note that if for $v\in V'$ and $c\in C_j$ there is no $u \in V\setminus V'$ satisfying $c(v,u)=c$ then there exists $u\in V'$ satisfying $c(v,u)=c$.
For $1 \le i \le b$, let $r_i$ denote the number of vertices of $V'$ that are not connected to any vertex of $V\setminus V'$ with an edge of color $c_i$.
For every $1 \le i \le b$, at least $r_i/2$ edges in the induced subgraph of $V'$ have color $c_i$.
Thus, $V^* = V' \cup V''$ is a set of at most $(b+1)a-\sum_{i=1}^b r_i$ vertices of $V$ and at least $a-r_i/2$ edges with color $c_i$.
For $1 \le i \le b$, we add $\lfloor r_i/2 \rfloor$ additional edges of color $c_i$ by adding at most $r_i$ vertices of $V$ to $V^*$.
This is always possible since $c_i$ is in $C_j$ and $2^j\ge a$.
Since the resulting set $V^*$ contradicts the assumption about $G$, no distinct $c_1, \ldots, c_b \in C_j$ satisfy $|V_{c_1} \cap \cdots \cap V_{c_b}| \ge a$.

\parag{Popular colors.}
Let $k_j = |C_j|$.
We now derive two upper bounds for $k_j$.

Fix an integer $j$ satisfying $2^j\ge b(2a^{b+1}n^{b-1})^{1/b}$ and let $c \in C_j$.
Since a single vertex is incident to at most $ba-b$ edges with color $c$, we get that $|V_c|\ge \frac{2^{j+1}}{ba-b} \ge \frac{2^j}{ba}$.
Since no $b$ sets $V_c$ have a large intersection, we use the contrapositive of Lemma \ref{le:GeneralSetIntersection} to obtain that the number of sets is not large.
By the lower bound for $2^j$, every $b$ sets $V_c$ intersect in less than $a \le \frac{2^{jb}}{2a^b b^b n^{b-1}}$ vertices.
Since the size of each such set is at least $m=\frac{2^j}{ba}$, the contrapositive of the lemma implies that the number of sets is
\begin{equation} \label{eq:RichDistUpper}
k_j < \frac{2bn^b}{m^b}= \frac{2bn^b}{(2^j/ba)^b} = \frac{2n^b b^{b+1}a^b}{2^{jb}}.
\end{equation}

When $2^j< b(2a^{b+1}n^{b-1})^{1/b}$ we will rely on the straightforward bound
\begin{equation} \label{eq:PoorDistUpper}
k_j < n^2/2^j.
\end{equation}

\parag{An energy argument.}
Let $m_c$ be the number of edges with color $c$.
In the beginning of the forbidden configurations part above, we proved that $m_c \le b(a-1)n$ for every $c\in C$.
Since every edge contributes to exactly one $m_c$, we have $\sum_{c\in C} m_c = \binom{n}{2}$.

Recall that the color energy of $G$ is defined as
\begin{equation*}
\EE(G) = \left|\left\{(v_1,u_1,v_2,u_2)\in V^{4} :\ c(v_1,u_1)=c(v_2,u_2)\right\}\right|.
\end{equation*}
In other words, $\EE(G)$ is the number of pairs of edges with the same color.
Since the graph is undirected, when computing $\EE(G)$ we consider $(v_1,u_1,v_2,u_2)$ and $(u_1,v_1,v_2,u_2)$ as the same quadruple.
On the other hand, we count $(v_1,u_1,v_2,u_2)$ and $(v_2,u_2,v_1,u_1)$ as two separate quadruples.
We can also think of $\EE(G)$ as the square of the $\ell_2$-norm of the color frequencies, since
\[ \EE(G) = \sum_{c \in C} m_c^2. \]

By the Cauchy-Schwarz inequality, we have
\[ \EE(G) = \sum_{c\in C} m_c^2 \ge \frac{\left(\sum_{c\in C} m_c\right)^2}{|C|} = \frac{\binom{n}{2}^2}{|C|} = \Omega\left( \frac{n^4}{|C|}\right). \]

Let $t = \lfloor \log b(2a^{b+1}n^{b-1})^{1/b} \rfloor$.
By dyadic pigeonholing together with \eqref{eq:RichDistUpper} and \eqref{eq:PoorDistUpper}, we obtain
\begin{align*}
\EE(G) = \sum_{c\in C} m_c^2 &= \sum_{j=0}^{\log (ban)} \sum_{c\in C \atop 2^{j} \le m_c <2^{j+1} } m_c^2 < \sum_{j=0}^{\log (ban)} k_j \left(2^{j+1}\right)^2 \\
&= \sum_{j=0}^{t} k_j 2^{2j+2} + \sum_{j=t+1}^{\log (ban)} k_j 2^{2j+2} \\[2mm]
&= \sum_{j=0}^{t} \frac{n^2}{2^j} \cdot 2^{2j+2} + \sum_{j=t+1}^{\log (ban)} \frac{2n^b b^{b+1}a^b}{2^{jb}} \cdot  2^{2j+2} \\[2mm]
&= O\left(n^{3-1/b}\right)+O\left(n^{3-2/b}\log n\right) = O\left(n^{3-1/b}\right).
\end{align*}

The $\log n$ in the last line exists only when $b=2$, but it does not affect the final bound in any case.
Combining the two above bounds on $\EE(G)$ yields $|C|=\Omega(n^{1+1/b})$, as required.
\end{proof}

\parag{Remark. } One way to improve the proof of Theorem \ref{th:LocalProp} might be to derive an upper bound on $k_j$ stronger than the straightforward bound of \eqref{eq:PoorDistUpper} when $2^j\approx n^{(b-1)/b}$.

%%%%%%%%%%%%%%%%%%%%%%%%%%%%%%%%%%%%%%%%%%%%%%%%%%%%%%%%%%%%%%%%%%%%%%%%%%%%%%%%%

\end{document}